\theoremstyle{plain}
\newtheorem{thm}{Theorem}
\newtheorem{lem}{Lemma}[section]
\newtheorem{prop}[lem]{Proposition}
\theoremstyle{definition}
\newcommand{\R}{\mathbb{R}}
\newcommand{\Z}{\mathbb{Z}}
\newcommand{\C}{\mathbb{C}}
\newcommand{\bbH}{\mathbb{H}}
\newcommand{\bbF}{\mathbb{F}}
\newcommand{\rCl}{\mathrm{Cl}}
\newcommand{\Mat}{\textup{Mat}}
\def\a{\alpha}
\def\d{\delta}
\begin{document}

\title[Old problems become one]{Real Clifford algebras and quadratic forms over $\bbF_2$:\\
two old problems become one}

\author{Valentin Ovsienko}

\address{
Valentin Ovsienko,
CNRS,
Laboratoire de Math\'ematiques 
U.F.R. Sciences Exactes et Naturelles 
Moulin de la Housse - BP 1039 
51687 REIMS cedex 2,
France}

\email{valentin.ovsienko@univ-reims.fr}




\maketitle

{\hfill {\it Importance of establishing connections}}

{\hfill {\it between different branches of scientific enquiry; }}

{\hfill {\it such steps are of the nature of revolutions in science.}}

\medskip
{\hfill W.K. Clifford}
\bigskip


Clifford algebras are 
important for many areas of pure and applied mathematics and physics.
Real Clifford algebras were introduced~\cite{Cliff} by the
English mathematician and philosopher William Kingdon Clifford in 1878.
Clifford called them ``geometric algebras''.
They generalize 
complex numbers and quaternions,
as well as multi-dimensional Grassmann algebras.
Real Clifford algebras and their representations appeared
independently in the work of Hurwitz on square identities~\cite{Hur1},
and still play an essential role in this subject.
Application of real Clifford algebras in topology was initiated in~\cite{ABS}.

Every real Clifford algebra is isomorphic to one of
the algebras of real, complex or quaternionic matrices
whose size is a power of~$2$, 
or their ``double copies'', i.e., the direct sum of one of these matrix algebras
with itself.
More precisely, one has the following theorem
of Chevalley~\cite{Che}.

\begin{enumerate}
\item
There are exactly two real Clifford algebras of dimension
$2^n$ for $n=2k$, namely $\Mat(2^{k},\R)$
and $\Mat(2^{k-1},\bbH)$.

\item
There are exactly three real Clifford algebras of dimension
$2^n$ for $n=2k+1$: the algebra $\Mat(2^{k},\C)$,
which is the only simple algebra in this case, and
two double copies of the above algebras of real and quaternionic matrices.
\end{enumerate}

\noindent
The usual modern way to obtain this classification uses periodicity results,
among which the Bott periodicity is the most famous one;
the reader is invited to consult the book~\cite{LM},
and an online course~\cite{FOF}.

The problem of classification of quadratic forms
over the field of two elements $\bbF_2$
was solved by Dickson~\cite{Dick}.
Recall that the vector space $\bbF_2^n$ consists in
$n$-tuples $x=(x_1,\ldots,x_n)$, where $x_i\in\{0,1\}$.
The classification result can be formulated as follows.

\begin{enumerate}
\item[(1')]
There are exactly two equivalence classes of 
non-degenerate\footnote{
The terminology will be clarified in Section~\ref{DS}.} 
quadratic forms on $\bbF_2^{2k}$,
these classes can be represented by the forms:
$$
\begin{array}{rcl}
{\bf q}_0(x)&=&x_1x_2+x_3x_4+\cdots+x_{2k-1}x_{2k},
\\[2pt]
{\bf q}_1(x)&=&x_1x_2+x_3x_4+\cdots+x_{2k-1}x_{2k}+x_{1}+x_{2},
\end{array}
$$
where the summation is performed modulo~$2$.
Note that the form ${\bf q}_1$ can also be written as a homogeneous quadratic form,
since $x_i^2=x_i$.

\item[(2')]
The form
$$
{\bf q}_2(x)=x_1x_2+x_3x_4+\cdots+x_{2k-1}x_{2k}+x_{2k+1},
$$
is the only regular
quadratic form on $\bbF_2^{2k+1}$.
There are exactly three equivalence classes of quadratic forms of rank $2k$
on $\bbF_2^{2k+1}$, those of
${\bf q}_0,{\bf q}_1$, trivially extended to $\bbF_2^{2k+1}$, and~${\bf q}_2$.
\end{enumerate}

Non-equivalence of the forms ${\bf q}_0$ and ${\bf q}_1$
follows from the {\it Arf invariant},
defined as follows:
$\mathrm{Arf}({\bf q})=1$ if the number of points $x\in\bbF_2^n$
such that ${\bf q}(x)=1$ is greater than the number of points at which ${\bf q}(x)=0$;
otherwise, $\mathrm{Arf}({\bf q})=0$.
An easy computation then shows that 
$$
\mathrm{Arf}({\bf q}_0)=0,
\qquad
\mathrm{Arf}({\bf q}_1)=1.
$$
The proof of the above theorem is very simple, see Appendix~1.

The goal of this note is to explain that the noticeable similarity 
of the above classification theorems is not a coincidence:
the problems are, indeed, equivalent.
This equivalence is quite surprising, and
can even be misunderstood.
Every Clifford algebra
is associated with a quadratic form, and every quadratic form
defines a Clifford algebra (the algebras and the forms are over the same ground field).
This is a classical and tautological relation,
which is not an equivalence as we will see.
Here, we compare Clifford algebras over $\R$ and quadratic forms over $\bbF_2$.

Quadratic forms over $\bbF_2$ are also very useful in topology, see, e.g.,~\cite{Kir}.
Can this be a reason or a consequence for equivalency
of two theories?
In particular, it is amusing to know that the difference between
real and quaternionic matrices is measured by the Arf invariant.
This could perhaps provide Clifford with some material as philosopher...

\section{Definition of real Clifford algebras}

The simplest definition of
a real Clifford algebra is the original definition of Clifford~\cite{Cliff}.

The real Clifford algebra $\rCl_{p,q}$
is the associative algebra with unit $\bf1$
and $n=p+q$ generators
$\iota_1,\ldots,\iota_n$ that anticommute:
\begin{equation}
\label{ClRel}
\iota_i\iota_j=-\iota_j\iota_i,\quad{}i\not=j,
\end{equation}
and square to $\bf1$ or $-\bf1$:
\begin{equation}
\label{ClRelBis}
\iota_i^2=\left\{
\begin{array}{rl}
\bf1,&1\leq{}i\leq{}p\\[2pt]
-\bf1,&p<i\leq{}n.
\end{array}
\right.
\end{equation}
The pair of numbers $(p,q)$ is called the signature.
The monomials 
$$
\iota_I=\iota_{i_1}\cdots{}\iota_{i_k},
$$
where $1\leq{}i_1<\ldots<i_k\leq{}n$, and $I=\{i_1,\ldots,i_k\}$
 form a basis of $\rCl_{p,q}$ 
so that, $\dim\rCl_{p,q}=2^n$.

A more general definition of Clifford algebras
is as follows.
Given a vector space $V$ over a field~$\bbF$,
and a quadratic form $Q:V\to\bbF$,
the corresponding Clifford algebra is the quotient
of the tensor algebra $T(V)$ by the bilateral ideal
generated by the elements of the form 
$$
v\otimes{}v-Q(v)\bf1,
$$
for all $v\in{}V$, and where $\bf1$ is the unit.

Choose a basis $\{\iota_1,\ldots,\iota_n\}$ of $V$, so that the Clifford algebra
is generated by $\iota_1,\ldots,\iota_n$, with relations
$$
\iota_i^2=Q(\iota_i){\bf1},
\qquad
\iota_i\iota_j+\iota_j\iota_i=B(\iota_i,\iota_j){\bf1},
$$
where $B$ is the {\it polar bilinear form} defined by 
$$
B(u,v):=Q(u+v)-Q(u)-Q(v).
$$

We consider only the case where $\bbF=\R$
and the bilinear form $B$ is non-degenerate.
Choosing a basis in which $B(\iota_i,\iota_j)=\pm\d_{i,j}$,
one then goes back to the above Clifford definition.
Note that, although real Clifford algebras are parametrized by two numbers $(p,q)$,
i.e., the signature of the quadratic form,
many of them are isomorphic.
Therefore, there is no equivalence between real quadratic forms and 
real Clifford algebras.

\section{Quadratic forms over $\bbF_2$}\label{DS}

A quadratic form ${\bf q}:\bbF_2^n\to\bbF_2$ can be written in coordinates as follows:
$$
{\bf q}(x)=\sum_{1\leq{}i\leq{}j\leq{}n}q_{ij}\,x_ix_j,
$$
where the coefficients $q_{ij}\in\{0,1\}$,
and where the summation is modulo~$2$.
Note that the ``diagonal terms'' of ${\bf q}$:
$$
\sum_{1\leq{}i\leq{}n}q_{ii}\,x^2_i
$$
constitute its {\it linear part},
since $x_i^2=x_i$.

A quadratic form on $\bbF_2^n$ also defines 
the polar bilinear form\footnote{
Note that in characteristic $2$, there is no difference between the ``$+$'' and ``$-$'' signs.}
$$
{\bf b}(x,y):={\bf q}(x+y)+{\bf q}(x)+{\bf q}(y),
$$
which is {\it alternating}, i.e., ${\bf b}(x,x)={\bf q}(0)=0$.
This implies that
its rank is always even.

The relation between quadratic forms and 
the corresponding polar bilinear alternating forms is
that the polar form ${\bf b}$ ``forgets'' about the linear part of ${\bf q}$.
Two quadratic forms ${\bf q}$ and ${\bf q}'$ on~$\bbF_2^n$ correspond to the same
polar form
if and only if ${\bf q}+{\bf q}'$ is a linear function.
This is the main difference with the real case where the quadratic form
can be reconstructed from the corresponding polar bilinear form.

The {\it rank} of a quadratic form on $\bbF_2^n$ is
defined as the rank of the corresponding 
polar form.
Therefore, a quadratic form over $\bbF_2$ can be {\it non-degenerate}
(i.e., of full rank)
only if $n=2k$.
If $n=2k+1$, and a quadratic form has rank $n-1$,
then it makes sense to ask if it is
{\it regular}, i.e., not equivalent to a form written in less that $n$ variables.

\section{Equivalence}
We will now explain the equivalence of the theories of
real Clifford algebras and quadratic forms on~$\bbF_2^n$.
The following crucial idea was suggested by Albuquerque and Majid~\cite{AM1}.

Consider the vector space $\R[\bbF_2^n]\simeq\R^{2^n}$ with
natural basis $e_x$, where $x\in\bbF_2^n$.
Since $\bbF_2^n$ is an abelian group,
the space $\R[\bbF_2^n]$
has a structure of commutative algebra defined by
$$
e_xe_y=e_{x+y}.
$$
Identify the basis of $\rCl_{p,q}$ 
with the basis of $\R[\bbF_2^n]$ by
\begin{equation}
\label{IdentiF}
\iota_I\longleftrightarrow
e_{x_I},
\end{equation}
where $x_I=(x_1,\ldots,x_n)$, such that $x_i=1$, iff $i\in{}I$.
The algebra $\rCl_{p,q}$ is thus identified with~$\R[\bbF_2^n]$,
as a vector space.
However, the product in $\rCl_{p,q}$ and in~$\R[\bbF_2^n]$ differs by a sign.

Consider a new product in~$\R[\bbF_2^n]$:
$$
e_x\cdot{}e_y:=(-1)^{f(x,y)}e_{x+y},
$$
where $f:\bbF_2^n\times\bbF_2^n\to\bbF_2$ is a function of two arguments.
This new structure is an algebra called a {\it twisted group algebra},
we denote it $(\R[\bbF_2^n],f)$.

\begin{prop}~\cite{AM1}.
\label{SuPr}
The algebra $\rCl_{p,q}$ is isomorphic to $(\R[\bbF_2^n],f)$
where $f$ is the bilinear form
$$
f(x,y)=\sum_{1\leq{}j<i\leq{}n}x_iy_j+\sum_{p+1\leq{}i\leq{}n}x_iy_i.
$$
\end{prop}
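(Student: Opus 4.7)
The plan is to check that the linear bijection $\phi: (\R[\bbF_2^n], f) \to \rCl_{p,q}$ defined on bases by $\phi(e_{x_I}) = \iota_I$ (which is well-defined and bijective by Clifford's description of a basis for $\rCl_{p,q}$) is an algebra homomorphism. Since both products are determined by their values on basis elements, this reduces to verifying the single identity
$$
\iota_I\,\iota_J \;=\; (-1)^{f(x_I,x_J)}\,\iota_{I\triangle J}
\qquad \text{in } \rCl_{p,q},
$$
where $I\triangle J$ is the symmetric difference (matching $x_I+x_J$ in $\bbF_2^n$).

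To compute the left-hand side, I would reorganize $\iota_{i_1}\cdots\iota_{i_k}\,\iota_{j_1}\cdots\iota_{j_\ell}$ in two stages. First, use the anticommutation relation \eqref{ClRel} to reorder all factors into weakly increasing order. Because both $I$ and $J$ are already ordered internally, every transposition arises from a pair $(i,j)\in I\times J$ with $i>j$ (equal indices produce no inversion, since $i>j$ is strict), so this contributes the sign $(-1)^N$ with $N:=\#\{(i,j)\in I\times J:\,i>j\}$. After reordering, each index in $I\cap J$ appears as an adjacent pair $\iota_i\iota_i$; applying \eqref{ClRelBis} contributes the further sign $(-1)^M$ with $M:=\#\{i\in I\cap J:\,i>p\}$ and collapses the product to $\iota_{I\triangle J}$. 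Thus $\iota_I\iota_J=(-1)^{N+M}\iota_{I\triangle J}$.

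It then remains to identify $N+M$ with $f(x_I,x_J)$ modulo $2$. Substituting the characteristic vectors into the defining formula for $f$, the first double sum $\sum_{j<i}(x_I)_i(x_J)_j$ counts precisely pairs $(i,j)\in I\times J$ with $i>j$, giving $N$, while the second sum $\sum_{i>p}(x_I)_i(x_J)_i$ counts exactly the indices of $I\cap J$ that are $>p$, giving $M$. Hence $f(x_I,x_J)\equiv N+M\pmod 2$, which combined with the previous step yields the desired identity and therefore the isomorphism.

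The only delicate point is the sign bookkeeping in the reordering step: one must be careful that indices lying in $I\cap J$ contribute nothing to the inversion count (so that they are correctly handled by the squaring step instead), and that the quadratic contribution $(-1)^M$ depends only on whether $i>p$, not on the full signature data. Everything else is routine verification once the identification \eqref{IdentiF} has been fixed.
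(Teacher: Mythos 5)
Your proof is correct, but it takes a genuinely different route from the paper. You fix the linear bijection $\phi(e_{x_I})=\iota_I$ and verify directly, by a sign count, that $\phi$ intertwines the two products on every pair of basis elements: you reorder $\iota_I\iota_J$ by counting inversions between $I$ and $J$ (giving $(-1)^N$), collapse the repeated generators using \eqref{ClRelBis} (giving $(-1)^M$), and then match $N+M$ modulo $2$ against $f(x_I,x_J)$ term by term. The paper instead argues via the universal property: it checks only that the $n$ generators $e_{\mathbf{x}_i}$ of $(\R[\bbF_2^n],f)$ satisfy the defining relations \eqref{ClRel}--\eqref{ClRelBis} and that $(\R[\bbF_2^n],f)$ is associative (immediate from bilinearity of $f$), so that there is an algebra map out of $\rCl_{p,q}$ which is bijective on the common $2^n$-dimensional basis. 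The paper's argument is shorter and avoids the combinatorial bookkeeping entirely; your argument is more self-contained and makes the sign explicit on \emph{all} products of basis monomials, not just the generators. One small point worth making explicit in your write-up: the reason the stable merge sort gives exactly $N$ transpositions even when $I\cap J\neq\emptyset$ is that you never commute an $\iota_a$ past another copy of $\iota_a$ (that would be the identity, not an anticommutation), and the two copies of each $a\in I\cap J$ necessarily land adjacent after sorting since nothing can lie strictly between them. You gesture at this in your last paragraph, and it is indeed the only delicate point.
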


\begin{proof}
Let us first check that the generators
$e_{{\bf x}_i}$, where
$$
{\bf x}_i=(0\ldots0\,1\,0\ldots0)
$$ 
with $1$ is at $i$th position,
satisfy the relations~(\ref{ClRel}) and~(\ref{ClRelBis}).
Indeed, $f({\bf x}_i,{\bf x}_j)=0$ and $f({\bf x}_j,{\bf x}_i)=1$, provided $i<j$,
so that the generators anticommute.
Furthermore, since $f({\bf x}_i,{\bf x}_i)=1$ for $i>p$, 
the generators~$e_{{\bf x}_i}$ square to $-\bf1$ for $i>p$ and to $\bf1$ for $i\leq{}p$.

The algebra $(\R[\bbF_2^n],f)$ is associative,
as readily follows from the fact that $f$ is bilinear.
Therefore, the map (\ref{IdentiF}) is a homomorphism of $(\R[\bbF_2^n],f)$ to~$\rCl_{p,q}$
with trivial kernel.
\end{proof}

The above realization of real Clifford algebras as twisted group algebras
was used in~\cite{AM1} to recover structural results, such as periodicities.

Let us go one step further and address the question of isomorphism
of twisted group algebras with bilinear twisting functions.
Define the ``diagonal''  quadratic form
$$
\a(x):=f(x,x).
$$
It turns out that the algebra is completely determined by $\a$.

\begin{thm}
\label{EqClTh}
Two twisted group algebras $(\R[\bbF_2^n],f_1)$ and $(\R[\bbF_2^n],f_2)$ with bilinear 
functions~$f_1$ and~$f_2$, are isomorphic
if and only if the corresponding quadratic forms,~$\a_1$ and~$\a_2$,
are equivalent.
\end{thm}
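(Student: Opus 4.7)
The $(\Leftarrow)$ direction will be a direct construction. Given a linear isomorphism $T:\bbF_2^n\to\bbF_2^n$ with $\a_1=\a_2\circ T$, I seek an algebra isomorphism of the shape
$$
\Phi(e_x)=(-1)^{g(x)}\,e_{Tx},\qquad g:\bbF_2^n\to\bbF_2.
$$
Imposing $\Phi(e_x\cdot_1 e_y)=\Phi(e_x)\cdot_2\Phi(e_y)$ and cancelling the common factor $e_{T(x+y)}$, the unknown $g$ must satisfy $g(x)+g(y)+g(x+y)\equiv h(x,y)\pmod{2}$, where $h(x,y):=f_1(x,y)+f_2(Tx,Ty)$. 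Expanding $\a_i(x+y)=f_i(x+y,x+y)$ identifies the polar form of $\a_i$ as $b_i(x,y)=f_i(x,y)+f_i(y,x)$, and the hypothesis $\a_1=\a_2\circ T$ forces $b_1(x,y)=b_2(Tx,Ty)$. These facts immediately show that $h$ is bilinear, symmetric, and has vanishing diagonal ($h(x,x)=\a_1(x)+\a_2(Tx)=0$).

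What remains on the $(\Leftarrow)$ side is the cohomological statement that \emph{every symmetric bilinear form $h$ on $\bbF_2^n$ with $h(x,x)=0$ has the form $(x,y)\mapsto g(x)+g(y)+g(x+y)$ for some $g$}. I prove it by fixing a basis and writing $h=\tilde h+\tilde h^{\top}$ with $\tilde h$ strictly upper triangular, which is legitimate because $h$ is symmetric with zero diagonal; then $g(x):=\tilde h(x,x)$ satisfies $g(x)+g(y)+g(x+y)=\tilde h(x,y)+\tilde h(y,x)=h(x,y)$ by bilinearity of $\tilde h$. Feeding this $g$ back in, $\Phi$ is the desired isomorphism.

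The $(\Rightarrow)$ direction is the real obstacle: an abstract algebra isomorphism need not respect the $\bbF_2^n$-grading, so $\a$ must be read off from intrinsic algebra data. A short calculation with the structure constants $e_x\cdot e_y=(-1)^{f(x,y)}e_{x+y}$ shows that $e_x$ and $e_y$ commute iff $b_\a(x,y)=0$, and that a general element $\sum c_xe_x$ is central iff every $x$ in its support lies in $\mathrm{rad}(b_\a)$; hence the centre $Z(A)$ of $A:=(\R[\bbF_2^n],f)$ has dimension $2^{n-\mathrm{rank}(\a)}$, which recovers the rank of $\a$ as an algebra invariant. The finer invariants---the linear part of $\a$ on $\mathrm{rad}(b_\a)$ and the Arf invariant of the non-degenerate quotient---are then captured by decomposing $A$ over its centre and identifying each simple factor, via Proposition~\ref{SuPr} and Chevalley's classification, as a matrix algebra over $\R$, $\C$, or $\bbH$. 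The subtlest technical step is organising this decomposition so that the resulting quotient is itself a twisted group algebra to which the above bookkeeping applies.
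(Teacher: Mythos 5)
Your proof of the $(\Leftarrow)$ direction is correct, and in fact more explicit than the paper's. The paper argues briefly that $\alpha$ determines the commutation and squaring relations of the generators $e_{\mathbf{x}_i}$ (and hence the presentation of the algebra), and that coordinate changes of $\bbF_2^n$ induce isomorphisms of twisted group algebras; it does not write down the isomorphism. You instead construct the isomorphism explicitly in the form $\Phi(e_x)=(-1)^{g(x)}e_{Tx}$, reduce the problem to solving $\delta g=h$ with $h(x,y)=f_1(x,y)+f_2(Tx,Ty)$, verify that $h$ is symmetric bilinear with zero diagonal, and prove the small cohomological lemma that every such $h$ is a coboundary via the strictly-upper-triangular decomposition $h=\tilde h+\tilde h^{\top}$. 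All of this checks out, and the lemma is a clean way to package what the paper leaves implicit.

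The $(\Rightarrow)$ direction, however, has a genuine gap which you yourself flag. The observation that the centre of $(\R[\bbF_2^n],f)$ is $\mathrm{span}\{e_x : x\in\mathrm{rad}(b_\alpha)\}$, hence has dimension $2^{n-\mathrm{rank}(\alpha)}$, is correct and recovers the rank. But from there, you propose to decompose $A$ over its centre, identify the simple factors ``via Proposition~\ref{SuPr} and Chevalley's classification,'' and extract the linear part of $\alpha$ on the radical and the Arf invariant of the nondegenerate quotient. This is where the argument is not carried out: for a general bilinear $f$, the twisted group algebra is \emph{not} of the form in Proposition~\ref{SuPr} (the generators $e_{\mathbf{x}_i}$ need not pairwise anticommute), so Chevalley's classification does not apply directly, and ``organising this decomposition'' is precisely the missing content. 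The paper takes the opposite route, which avoids any intrinsic invariant theory: it invokes Dickson's classification to reduce $\alpha$ (up to the established coordinate-change invariance) to one of the normal forms $\mathbf{q}_0,\mathbf{q}_1,\mathbf{q}_2$ of some rank $2k$, computes the corresponding algebras explicitly as $\Mat(2^k,\R)^{\oplus 2^{n-2k}}$, $\Mat(2^{k-1},\bbH)^{\oplus 2^{n-2k}}$, and $\Mat(2^k,\C)^{\oplus 2^{n-2k-1}}$, and observes these are pairwise non-isomorphic semisimple algebras. Matching the classifications on both sides is enough; the ``reconstruct $\alpha$ from the algebra'' viewpoint is viable in principle but would need the work you have deferred.
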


Our proof consists in two parts.
We first prove the ``if'' part, while
the converse statement,
based on classification of quadratic forms, will be proved in the end of Section~\ref{FFTA}.

Consider a twisted group algebra $(\R[\bbF_2^n],f)$ with bilinear $f$,
and two generators, $e_i,e_j$.
Their commutation relation is determined by the value of
$f({\bf x}_i,{\bf x}_j)+f({\bf x}_j,{\bf x}_i)$.
It turns out that the symmetrization of $f$
coincides with the polarization of $\a$, i.e.,
$$
f(x,y)+f(y,x)=\a(x+y)+\a(x)+\a(y),
$$
for all $x,y\in\bbF_2^n$.
Indeed, it suffices to check this for every monomial $x_iy_j$.
Therefore, the quadratic form $\a$ completely determines the relations between the generators.

We proved that if~$\a_1=\a_2$, then the algebras are isomorphic.
Since, a twisted group algebra does not change under coordinate transformations 
of $\bbF_2^n$, the ``if'' part follows.

To prove the ``only if'' part of the theorem,
we will use Dickson's classification of quadratic forms on $\bbF_2^n$,
and show that non-equivalent quadratic forms correspond to non-isomorphic algebras.

\section{From quadratic forms to algebras}\label{FFTA}

Let us consider the quadratic forms 
${\bf q}_0,{\bf q}_1$ and~${\bf q}_2$,
and show that the corresponding twisted group algebras on $\R[\bbF_2^n]$
are precisely the matrix algebras
appearing in the Chevalley classification of real Clifford algebras.

Consider first the $2$-dimensional case, and
the quadratic form ${\bf q}_0=x_1x_2$ on $\bbF_2^2$,
The algebra~$\R[\bbF_2^2]$ has
two generators,
$e_1$ and $e_2$.
Since the polarization of ${\bf q}_0$ is the $2$-form
$$
{\bf b}(x,y)=x_1y_2+x_2y_1,
$$
the generators anticommute,
and since ${\bf q}_0({\bf x}_1)={\bf q}_0({\bf x}_2)=0$,
one has $e_1^2=e_2^2=\bf1$.
This algebra is isomorphic to the algebra of real $2\times2$ matrices.
Indeed, the following generators of $\Mat(2,\R)$:
$$
e_1=
\left(
\begin{array}{cc}
1&0\\
0&-1
\end{array}
\right),
\qquad
e_2=
\left(
\begin{array}{cc}
0&1\\
1&0
\end{array}
\right)
$$
satisfy the above relations.

The quadratic form ${\bf q}_1=x_1x_2+x_1+x_2$ on $\bbF_2^2$
corresponds to the case
$e_1^2=e_2^2=-\bf1$, since ${\bf q}_1({\bf x}_1)={\bf q}_1({\bf x}_2)=1$.
Thus $e_1$ and $e_2$ generate the algebra of quaternions $\bbH$.

(1)
Consider now the form ${\bf q}_0$ on $\bbF_2^{2k}$.
The generators of the corresponding algebra structure on~$\R[\bbF_2^{2k}]$ 
in each $2$-dimensional block:
$\{e_1,e_2\},\{e_3,e_4\},\ldots$ form a copy of $\Mat(2,\R)$.
The generators from different blocks commute,
so that the algebra on $\R[\bbF_2^{2k}]$
is just the tensor product of $k$ copies:
$$
\Mat(2,\R)\otimes\Mat(2,\R)\otimes\cdots\otimes\Mat(2,\R)
\simeq\Mat(2^k,\R).
$$
Similarly, the algebra on~$\R[\bbF_2^{2k}]$ corresponding to the form ${\bf q}_1$
is
$$
\Mat(2,\R)^{\otimes{}(k-1)}\otimes\bbH
\simeq\Mat(2^{k-1},\bbH).
$$
The forms ${\bf q}_0$ and ${\bf q}_1$ thus correspond
to the algebras of real and quaternionic matrices, respectively.

(2)
The form ${\bf q}_2$ on $\bbF_2^{2k+1}$
corresponds to the algebra
$$
\Mat(2,\R)^{\otimes{}k}\otimes\C
\simeq\Mat(2^{k},\C),
$$
since the last generator $e_{2k+1}$
commutes with $e_i$ for $i\leq{}2k$ and squares to $-\bf1$, 
and therefore generates the
algebra of complex numbers.

Finally, the forms 
${\bf q}_0$ and ${\bf q}_1$ trivially extended to $\bbF_2^{2k+1}$
correspond to the algebras 
$$
\Mat(2^k,\R)\otimes\R^2
\quad\hbox{and}\quad\Mat(2^{k-1},\bbH)\otimes\R^2,
$$
respectively.
These are just the double copies of the above matrix algebras.

We are ready to complete the proof of Theorem~\ref{EqClTh}.
Consider a twisted group algebra $(\R[\bbF_2^n],f)$ with bilinear function $f$,
and the corresponding quadratic form $\a$.
By Dickson's theorem, every quadratic form on $\bbF_2^n$
of rank $2k$ is equivalent to
${\bf q}_0,{\bf q}_1$, or~${\bf q}_2$.
We have just proved that $(\R[\bbF_2^n],f)$ 
must be isomorphic to a direct sum of
several copies of $\Mat(2^k,\R)$, $\Mat(2^{k-1},\bbH)$, or
$\Mat(2^{k},\C)$, respectively.
Sense these three algebras are obviously not isomorphic
to each other, we proved that non-equivalent quadratic forms
correspond to non-isomorphic algebras.

\section{Comments}
The above linear algebra considerations hide the cohomological
nature of Theorem~\ref{EqClTh}.
A twisted group algebra
is associative if and only if the function $f$ satisfies
$$
(\d{}f)(x,y,z):=
f(x,y)+f(x,y+z)+f(x+y,z)+f(y,z)=0,
$$
for all $x,y,z\in\bbF_2^n$.
Such a function $f$ is called a $2$-cocycle on the abelian group~$\Z_2^n$
(it is convenient to use this notation for $\bbF_2^n$ considered only as an abelian group,
 and not as a vector space).
This condition is obviously satisfied if $f$ is bilinear,
but bilinearity is, of course, not necessary.
The above condition is also always satisfied for the functions
$f$ of the form
$$
f(x,y)=g(x+y)+g(x)+g(y),
$$
where $g$ is an arbitrary function
of one argument.
Then $f$ is called a trivial cocycle, or a coboundary.
For instance, the polar form $\bf b$ is the coboundary of $\bf q$.
A statement closely related to Theorem~\ref{EqClTh}, affirms that,
for an arbitrary $2$-cocycle $f$ (not necessarily bilinear),
the diagonal function $f(x,x)$
must be a quadratic form, and this form
determines the cohomology class of $f$ 
(for more details, see~\cite{MGO}).

A more general class of twisted group algebras $(\R[\bbF_2^n],f)$,
that contains the classical algebra of octonions,
was considered in~\cite{MGO}.
These are algebras where $f$ is not necessarily a $2$-cocycle, but
$\d{}f$ is a symmetric function of three arguments.
In this case, $\a(x):=f(x,x)$ must be a cubic form on $\bbF_2^n$.
Two such algebras are proved to be isomorphic
(at least as $\Z_2^n$-graded algebras) if and only if the corresponding
cubic forms are equivalent.
Note that classification of cubic forms on $\bbF_2^n$ is a difficult unsolved problem.

Quadratic forms over $\bbF_2$ have been recently used in~\cite{RE}
to classify gradings of simple real algebras by abelian groups.
The quadratic forms 
${\bf q}_0,{\bf q}_1$ and~${\bf q}_2$
appear explicitly in this classification.
This result is of course closely related to real Clifford algebras,
see in particular Remark 17 of~\cite{RE}.

Clifford algebras were considered as superalgebras,
i.e., $\Z_2$-graded algebras, already in~\cite{ABS}.
Thanks to Proposition~\ref{SuPr}, Clifford algebras with $n$
generators can be understood as
{\it graded-commutative} algebras over $\Z_2^n$
(see also~\cite{Sim} for a classification).
This viewpoint was recently used to revisit the old classical problem
due to Cayley, of developing linear algebra with coefficients in Clifford algebras,
see~\cite{CM} and references therein.
In particular, it offers a new understanding of
the Dieudonn\'e determinant
of quaternionic matrices.

\bigskip

{\bf Acknowledgements}.
I am pleased to thank Alberto Elduque and Sophie Morier-Genoud
for helpful comments.
I am also grateful to the anonymous referee for for his/her 
comments and suggestions that allowed to significantly improve
and correct this note.

\section*{Appendix 1: classification of quadratic forms on $\bbF_2^n$}

For the sake of completeness, let us give a proof
of Dickson's theorem.

(1') 
Let ${\bf q}$ be a non-degenerate quadratic form on $\bbF_2^{2k}$.
There exist coordinates on $\bbF_2^{2k}$
in which the polar bilinear form ${\bf b}$ associated to ${\bf q}$ is
written as:
$$
{\bf b}(x,y)=
x_1y_2+x_2y_1
+\cdots+
x_{2k-1}y_{2k}+x_{2k}y_{2k-1}.
$$
This is obvious since ${\bf b}$ is ``skew symmetric'' (alternating).
Indeed, one choses the first coordinate axis in an arbitrary way, then the
second axis such that ${\bf b}({\bf x}_1,{\bf y}_2)\not=0$,
and the other coordinates are in the orthogonal complement.

The quadratic form~${\bf q}$ is then as follows:
\begin{equation}
\label{ThFcn}
{\bf q}=x_1x_2+x_3x_4+\cdots+x_{2k-1}x_{2k}+
\hbox{\rm (linear terms)}.
\end{equation}
Consider one of the binary terms $x_ix_{i+1}$ of ${\bf q}$.
Changing coordinates,
$$
x_i'=x_i,
\qquad
x'_{i+1}=x_i+x_{i+1},
$$
this term is equivalent
to $x_ix_{i+1}+x_i$.
Therefore, $2$-dimensional blocks of ${\bf q}$ can be reduced to one of two types:
$$
x_ix_{i+1}
\quad\hbox{or}\quad
x_ix_{i+1}+x_i+x_{i+1}.
$$

Consider now a pair of blocks of the second type:
$$
x_ix_{i+1}+x_jx_{j+1}+x_i+x_{i+1}+x_j+x_{j+1}.
$$
The coordinate transformation:
$$
x_i'=x_i+x_j,\quad
x'_{i+1}=x_{i+1},\quad
x_j'=x_j,\quad
x'_{j+1}=x_j+x_{i+1}.
$$
sends this form to $x_ix_{i+1}+x_jx_{j+1}$.

It follows that ${\bf q}$
is equivalent to ${\bf q}_0$ if it contains an even number of blocks of the second type, 
or to ${\bf q}_1$, otherwise.
As mentioned, ${\bf q}_0$ and ${\bf q}_1$ are not equivalent.

(2')
Consider a form ${\bf q}$ of rank $2k$ on $\bbF_2^{2k+1}$.
Choose coordinates in which
the polar form~${\bf b}$ is as above, then ${\bf q}$
is again as in~(\ref{ThFcn}).
If then ${\bf q}$ contains the linear term $x_{2k+1}$,
then the coordinate transformations
$x'_{2k+1}=x_{2k+1}+x_i$
allow us to kill all other linear terms, so that ${\bf q}$ is equivalent to ${\bf q}_2$.
Otherwise, the problem is reduced to Part (1').

\section*{Appendix 2: the table of Clifford algebras}

The following well-known table of real Clifford algebras 
well illustrates Chevalley's theorem.
$$
\begin{array}{l|llllll}
p\setminus{}q&0&1&2&3&4&5\\
\hline\\
0&\R&\C&\bbH&\bbH^2&\Mat(2,\bbH)&\Mat(4,\C)\\[4pt]
1&\R^2&\Mat(2,\R)&\Mat(2,\C)&\Mat(2,\bbH)&\Mat(2,\bbH)^2&\ldots\\[4pt]
2&\Mat(2,\R)&\Mat(2,\R)^2&\Mat(4,\R)&\Mat(4,\C)&\ldots\\[4pt]
3&\Mat(2,\C)&\Mat(4,\R)&\Mat(4,\R)^2&\ldots\\[4pt]
4&\Mat(2,\bbH)&\Mat(4,\C)&\ldots\\[4pt]
5&\Mat(2,\bbH)^2&\ldots
\end{array}
$$
where the horizontal axis is parametrized by $q$ and the vertical by $p$.
The algebras with $n$ generators are represented in the diagonal $p+q=n$.

The table can be filled using the first examples:
$$
\rCl_{0,0}\simeq\R,\qquad
\rCl_{1,0}\simeq\R^2,\qquad
\rCl_{0,1}\simeq\C
$$
and the following periodicity statements:
$$
\rCl_{p,q+2}\simeq\rCl_{q,p}\otimes\bbH,\qquad
\rCl_{p+1,q+1}\simeq\rCl_{p,q}\otimes\Mat(2,\R),\qquad
\rCl_{p+2,q}\simeq\rCl_{q,p}\otimes\Mat(2,\R),
$$
where the tensor products are over $\R$.
We will not prove them, but they can be obtained directly by comparing
the generators of the algebras, see~\cite{LM,FOF},
or by analyzing quadratic forms on $\bbF_2^n$.
Let us mention that
these periodicities imply the following remarkable periodicity modulo~$8$:
$$
\rCl_{p,q+8}\simeq\rCl_{p+4,q+4}\simeq\rCl_{p+8,q}\simeq\rCl_{p,q}\otimes\Mat_{16}(\R),
$$
called the Bott periodicity.


\end{document}